 \newtheorem{theorem}{Theorem}[section]
 \newtheorem{corollary}[theorem]{Corollary}
 \newtheorem{lemma}[theorem]{Lemma}
 \newtheorem{proposition}[theorem]{Proposition}
 \theoremstyle{definition}
 \theoremstyle{remark}
 \numberwithin{equation}{section}
\begin{document}

\title[Commuting powers and exterior degree of finite groups]
 {Commuting powers and exterior degree of finite groups}

\author[P. Niroomand]{Peyman Niroomand}
\address{School of Mathematics and Computer Science\\
Damghan University of Basic Sciences\\
Damghan, Iran}
\email{p$\_$niroomand@yahoo.com}

\author[R. Rezaei]{Rashid Rezaei}
\address{Department of Mathematics, Faculty of
Mathematical Sciences\\
Malayer University\\
Post Box: 657719--95863, Malayer, Iran}
\email{ras$\_$rezaei@yahoo.com}

\author[F.G. Russo]{Francesco G. Russo}
\address{Department of Mathematics\\
University of Palermo, via Archirafi 34, 90123, Palermo, Italy
} \email{francescog.russo@yahoo.com}

%\setcounter{page}{0}% to be added afterwards
%\coordinates{XX}{2011}{X}{XX-XX}% to be added afterwards
%\date{\today}% to be added afterwards % submission date
%\undate{date}% to be added afterwards % revised date
\subjclass[2010]{Primary: 20J99, 20D15;  Secondary:  20D60; 20C25.}% AMS Subject Classification (2000)
\keywords{$m$--th relative exterior degree, commutativity degree, exterior product, Schur multiplier, dihedral groups,  generalized quaternion groups.}

%\thanks{\textit{Mathematics Subject Classification 2010:} .}

%\keywords{.}

%\date{\today}
%----------additions
%\dedicatory{}
%%% ----------------------------------------------------------------------

\begin{abstract}
In [P. Niroomand, R. Rezaei, On the exterior degree of finite groups, Comm. Algebra 39 (2011), 335--343] it is introduced a group invariant, related to the number of elements  $x$ and $y$ of a finite group $G$, such that $x \wedge y= 1_{_{G \wedge G}}$ in the exterior square $G \wedge G$ of $G$. This number gives restrictions on the Schur multiplier of $G$ and, consequently, large classes of groups can be described. In the present paper  we generalize  the previous investigations on the topic, focusing on the number of elements of the form $h^m \wedge k$  of $H \wedge K$ such that $h^m \wedge k= 1_{_{H \wedge K}}$, where $m\ge1$ and $H$ and $K$ are arbitrary subgroups of $G$.
\end{abstract}

%%% ----------------------------------------------------------------------
\maketitle
%%% ----------------------------------------------------------------------

%\title{Commuting powers and exterior degree of finite groups}

%\articletype{Research}
%\author{Peyman~Niroomand\inst{1}\email{p$\_$niroomand@yahoo.com},
%        Rashid~Rezaei\inst{2}\email{ras$\_$rezaei@yahoo.com},
%        Francesco G.~Russo\inst{3}\email{francescog.russo@yahoo.com}}

%\institute{\inst{1} School of Mathematics and Computer Science\\
%           Damghan University of Basic Sciences\\
%           Damghan, Iran 
%          \inst{2}Department of Mathematics\\
%           Malayer University\\
%           Malayer, Iran
%           \inst{3} Department of Mathematics\\ 
%           University of Palermo\\ 
%           Via Archirafi 34, 90123\\
%           Palermo, Italy}

%\abstract{}

%\keywords{}
%\msc{20J99, 20D15; 20D60; 20C25.}
%\begin{document}
%\maketitle
%----------additions
%\dedicatory{}
%%% ----------------------------------------------------------------------
%%% ----------------------------------------------------------------------

%%% ----------------------------------------------------------------------

\section{Non--abelian tensor product, homological algebra and commutativity degree}

All the groups, which are considered in the paper, are supposed to be finite. Some technical notions of homological algebra should be recalled from \cite{e1,e2,e3} in order to formulate our topic of investigation in an appropriate way.

For any group $G$ we can construct functorially a \textit{classifying space} $B(G)$ with the following properties.
\begin{itemize}
\item[1)] The topological space $B(G)$ is a connected CW-complex.
\item[2)] The fundamental group $\pi_1(B(G))$ of $B(G)$ is isomorphic to $G$.
\item[3)] The higher homotopy groups $\pi_n(B(G))$ are trivial for $n\ge 2$.
\end{itemize}
The singular homology groups of any space $X$, with coefficients in the abelian
group $\mathbb{Z}$, will be denoted by $H_n(X)$. Since the homology groups $H_n(B(G))$ depend
only on the group $G$, we can write $H_n (G) = H_n (B(G))$, for all $n\ge 0$.
For each normal subgroup $H$ in $G$ we functorially construct a space $B(G,H)$
as follows. The natural homomorphism $G\rightarrow G/H$ induces a map $f : B(G) \rightarrow
B(G/H)$. Let $M(f)$ denote the mapping cylinder of this map. Note that $B(G)$
is a subspace of $M(f)$, and that $M(f)$ is homotopy equivalent to $B(G/H)$. We
take $B(G,H)$ to be mapping cone of the cofibration $B(G)\rightarrow M(f)$.
The cofibration sequence $B(G)\rightarrow M(f) \rightarrow B(G,H)$ yields a natural long
exact homology (Mayer--Vietoris) sequence
%\begin{equation}\label{mv1}
$\ldots \rightarrow H_{n+1}(G/H) \rightarrow H_{n+1}(B(G,H)) \rightarrow H_n(G) \rightarrow H_n(G/H) \rightarrow \ldots$
%\end{equation}
for $n\ge 0$. It can be shown that
$H_1 (B(G,H)) = 0$ and $H_2(B(G,H))\simeq H/[H,G]$. The classifying space $B(F)$ of a free group $F$ is one-dimensional, and so $H_n (F) = 0$
for $n\ge 2$ and it is easy to check that 
$H_1 (G)\simeq G/[G,G]=G^{ab}$ and $H_2(G)\simeq \ker  \psi \simeq M(G)$,
where $G = F/R$ is a presentation of $G$ from $F$ and $R$ and $\psi: R/[R,F] \rightarrow F/[F,F]$ is a natural homomorphism and $M(G)$ is the \textit{Schur multiplier} of $G$. Now it is meaningful to define the \textit{Schur multiplier of the pair of groups} $(G,H)$ as the set
$M(G,H) = H_3 (B(G,H))$. We can generalize more. By a \textit{triple} we mean a group $G$ with two normal subgroups $H$ and $K$. A
\textit{homomorphism of triples} $(G,H,K) \rightarrow (G',H',K')$ is a group homomorphism
$G \rightarrow G'$ that sends $H$ into $H'$ and $K$ into $K'$. The \textit{Schur multiplier of the triple}
$(G,H,K)$ is a functorial abelian group $M(G,H,K)$ whose principal feature is
a natural exact sequence
%\begin{equation}\label{mv2}
$H_3(G,H) \rightarrow H_3(G/H, HK/K ) \rightarrow M(G,H,K) \rightarrow M(G,K)
\rightarrow M(G/H, HK/H ) \rightarrow H \cap K/[H\cap K,G][H,K]
\rightarrow K/[K,G]  \rightarrow K H/H[K,G] \rightarrow 0$
%\end{equation}
in which, by definition, $H_3(G,H)=H_4(B(G,H))$. The definition of $M(G,H,K)$ is in terms of the mapping cone $B(G,H,K)$
of the canonical cofibration $B(G,K) \rightarrow B(G/K,HK/H)$. An analogy with the case of pairs allows us to define
$M(G,H,K) = H_4(B(G,H,K))$.

The Schur multiplier of a triple is related to an important construction, which we recall as in \cite[Section 3]{e3} and \cite{bjr,bl}. A group  $G$ acts by conjugation on its normal subgroups $H$ and $K$ via the rule $^gx=gxg^{-1}$, for $g$ in $G$ and $x$ in $H$ or $K$, and the \textit{exterior product} $H \wedge K$ is defined as the group generated by the symbols $h \otimes k$, subject to the relations:
\begin{equation}hh'\otimes k=(~^hh'\otimes ~^hk) \ (h\otimes k), \ \ \ \ 
kk'\otimes h=(k\otimes h) \ (~^kh \otimes ~^kk'), \ \ \ \
y \otimes y=1,\end{equation}
where $h, h' \in H$, $k, k' \in K$ and $y \in H \cap K$. Briefly, $h \wedge k$ denotes $h \otimes k$ satisfying all the above relations. The map $\kappa' : h\wedge k \in H \wedge K \mapsto [h,k]=hkh^{-1}k^{-1} \in [H,K]$ turns out to be a group epimorphism, whose kernel $\ker \kappa' $ is abelian. Furthermore, $\ker \kappa' \simeq M(G,H,K)$ whenever $G=HK$ (see \cite[Theorem 6.1]{e3}). Omitting the relation $y \otimes y =1$, it is similarly defined the \textit{non--abelian tensor product} $H \otimes K$ of $H$ and $K$. By analogy, the map $\kappa : h\otimes k \in H \otimes K \mapsto [h,k]=hkh^{-1}k^{-1} \in [H,K]$ turns out to be a group epimorphism, whose kernel $\ker \kappa=J(G,H,K)$ is again abelian. We note that $J(G,H,K)$ is related to the fundamental group of a covering space and has significant interest in algebraic topology (see \cite{bjr,bl,e1,e2,e3}). 

The above information are summarized below, where $G=HK$ (with $H$ and $K$ normal in $G$).
\begin{equation}\label{diag}\begin{CD}
1 @>>>J(G,H,K) @>>>H \otimes K @>\kappa>>[H,K]@>>>1\\
@. @VVV @VVV @|\\
1 @>>> M(G,H,K) @>>>H \wedge K @>\kappa'>> [H,K] @>>>1.\\
\end{CD}
\end{equation}
From the results in \cite{bjr,bl,e1,e2,e3}, \eqref{diag} is commutative with central extensions as rows and natural epimorphisms $\pi: h \otimes k \in J(G,H,K) \mapsto h \wedge k \in M(G,H,K)$, $\epsilon: h\otimes k \in  H \otimes K \mapsto h \wedge k \in H \wedge K$ as columns. Of course, if $G=H=K$, then $M(G)$ is the \textit{Schur multiplier} of $G$, $H \otimes K = G \otimes G$ is the \textit{non--abelian tensor square} of $G$ and,  in particular, $G^{ab} \otimes_\mathbb{Z} G^{ab}$ is the usual tensor square of an abelian group.

It may be helpful to recall that the actions of $H$ on $K$ induce an action $a \in H*K \ \longmapsto \ ^a(h\otimes k) = \ ^ah \ \otimes \ ^ak \in H \otimes K$, which allows us to see $H \otimes K$  as a suitable homomorphic image of the central product $H * K$.   In this context, if $x \in G$, the \textit{exterior centralizer} of $x$ in $G$ is the set $C_G^\wedge(x)=\{a\in G  \ | \ a \wedge x=1_{_{G \wedge G}}\}$, which turns out to be a subgroup of $G$ and the \textit{exterior center} of $G$ is the set $Z^\wedge(G)=\{g \in G \ | \ 1_{_{G \wedge G}}=g \wedge y \in G \wedge G, \forall y \in G\}={\underset{x \in G}\bigcap} C_G^\wedge(x)$ which  is a subgroup of the  center $Z(G)$ of $G$. Further details can be found in \cite{e1,e2,pf,nr}. Very briefly, we mention that the interest in studying $C_G^\wedge(x)$ and $Z^\wedge(G)$ is due to the fact that they allow us to decide whether $G$ is a \textit{capable group} or not, that is, whether $G$ is isomorphic to $E/Z(E)$ for some group $E$ or not. \cite{bfs} and \cite[Chapter 21]{b} illustrate that capable groups are well--known and classified.

Now we recall from \cite{dn1,dn2,elr,err,gr,l1,l2,r} that the \textit{commutativity degree} of $G$ is the ratio
\begin{equation}d(G) = \frac{|\{(x, y) \in G \times G  \ | \ [x, y] = 1\}|}{|G|^2} = \frac{1}{|G|^2} \underset{x \in G} {\sum} |C_G(x)|= \frac{k(G)}{|G|},\end{equation}
where $k(G)$ is the \textit{number of the $G$--conjugacy classes} $[x]_G=\{x^g \ | \ g \in G\}$ that constitute $G$. There is a  wide production on $d(G)$ and its generalizations in the last decades. For instance, given an arbitrary subgroup $H$ of $G$, it was introduced in \cite{elr} the \textit{$n$-th relative  nilpotency degree} of $G$ 
\begin{equation}d^{(n)}(H, G) = \frac{|\{(h_1,\ldots, h_n, g) \in H^n \times G  \ | \  [h_1,\ldots,h_n,g] = 1\}|}{|H|^n \ |G|}	= \frac{1}{|H|^n \ |G|} \underset{h_1,\ldots, h_n \in H} {\sum} |C_G([h_1,\ldots,h_n])|.\end{equation} It is clearly a generalization of $d(G)$, and, in case $n=1$, it was proposed the further generalization \begin{equation}d(H,K) = \frac{|\{(h,k) \in H \times K  \ | \ [h,k] = 1\}|}{|H| \ |K|}= \frac{1}{|H| \ |K|} \sum_{h \in H} |C_K(h)|=\frac{k_K(H)}{|H|}\end{equation}  in \cite{dn1}, where $H$ is a normal subgroup of $G$, $K$ is an arbitrary subgroup of $G$ and $k_K(H)$ is the \textit{number of the $K$--conjugacy classes} $[h]_K=\{h^k \ | \ k \in K\}$ that constitute $H$.

We will focuse on a recent contribution in \cite{pr}, where it is introduced the
\textit{exterior degree} of $G$  \begin{equation}d^\wedge(G)=\frac{|\{(x,y)\in G \times G \ | \  x \wedge y =1_{_{G \wedge G}}\}|}{|G|^2},\end{equation} which can be written by \cite[Lemma 2.2]{pr} as \begin{equation}d^\wedge(G)=\frac{1}{|G|} \sum^{k(G)}_{i=1} \frac{|C^\wedge_G(x_i)|}{|C_G(x_i)|}.\end{equation} In analogy, given two arbitrary subgroups $H$ and $K$ of $G$, we  define for  $m\ge1$ the \textit{$m$-th relative exterior degree} of $H$ and $K$ in $G$ \begin{equation}d^\wedge_m (H,K)=\frac{|\{(h,k)\in H \times K \ | \  h^m \wedge k =1_{_{H \wedge K}}\}|}{|H| \  |K|}.\end{equation} In particular, $d^\wedge_m(G)=d^\wedge_m(G,G)$ is the  \textit{$m$-th exterior degree} of $G$ and, of course, $d^\wedge_1(G,G)=d^\wedge(G)$ so that it is meaningful to generalize the bounds in \cite{pr}. We also note that for $H=G$ and $m=1$ there are results on $d^\wedge(G,K)$ in  \cite{nr}.  While the commutativity degree represents the probability that two randomly picked elements of  $G$ are commuting, the $n$-th relative nilpotency degree is a variation on this theme. By analogy with the operator $\wedge$, the $m$-th relative exterior  degree is a variation on the theme of the exterior degree, involving the powers of  $x$ and the single element $y$. We will study the effects of $d^\wedge_m(H,K)$ on the structure of $G$ in the successive sections.

\section{Basic properties}

An immediate observation is that we may rewrite $d^\wedge_m (H,K)$ as:
\begin{equation}d^\wedge_m (H,K)= \frac{1}{|H| \ |K| }  \sum_{h\in H} |C^\wedge_K(h^m)|.
\end{equation} Assume that $H$ is normal in $G$ and  $C_1 \ldots, C_{k_K(H)}$ are the $K$--conjugacy classes that constitute $H$. It follows that 
\begin{equation}\label{formula}|H| \ |K| \ d^\wedge_m (H,K)=  \sum_{h\in H} |C^\wedge_K(h^m)|=\sum^{k_K(H)}_{i=1} \sum_{h \in C_i} |C^\wedge_K(h^m)|=\sum^{k_K(H)}_{i=1} |K:C_K(h_i)| \ |C^\wedge_K(h^m_i)|\end{equation}
\[= \sum^{k_K(H)}_{i=1}  \frac{|K|}{|C_K(h^m_i)|} \ \frac{|C_K(h^m_i)|}{|C_K(h_i)|} \ |C^\wedge_K(h^m_i)|
=|K| \ \sum^{k_K(H)}_{i=1} \left( \frac{|C_K(h^m_i)|}{|C_K(h_i)|} \right) \ \frac{|C^\wedge_K(h^m_i)|}{|C_K(h^m_i)|}=|K| \ \sum^{k_K(H)}_{i=1}  \alpha(m,i) \frac{|C^\wedge_K(h^m_i)|}{|C_K(h^m_i)|},\]
where $\alpha(m,i)$ is the index of $|C_K(h_i)|$ in  $|C_K(h^m_i)|$ and then a natural number. The assumption that $H$ has to be normal in $G$ is done in order to have an entire conjugacy class which is fixed under the action of $K$ on $H$. It may be helpful for the rest of the paper to define the group
\begin{equation}L(m,i;h,K) =  \frac{C_K(h^m_i)}{C^\wedge_K(h^m_i)}.\end{equation}

\begin{lemma} \label{l:1}
Let $H$ be a normal subgroup of a group $G$ and $K$ be a subgroup of $G$. Then 
\begin{equation}d^\wedge_m (H,K)=  \frac{1}{|H|} \sum^{k_K(H)}_{i=1}  \alpha(m,i) \frac{|C^\wedge_K(h^m_i)|}{|C_K(h^m_i)|}=\frac{1}{|H|} \ \sum^{k_K(H)}_{i=1} \frac{\alpha(m,i)}{|L(m,i;h,K)|}.\end{equation}
In particular, if $G=HK$ and $K$ is normal in $G$, then $L(m,i;h,K)$ is isomorphic to a subgroup of $M(G,H,K)$.
\end{lemma}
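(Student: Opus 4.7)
The first identity is a direct rewriting of \eqref{formula}: dividing that equation by $|K|$ and substituting $|L(m,i;h,K)|=|C_K(h_i^m)|/|C^\wedge_K(h_i^m)|$ into the final sum finishes the first assertion on one line, with no further work.

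For the subgroup embedding, my plan is to introduce the map
\[
\phi\colon C_K(h_i^m)\longrightarrow H\wedge K,\qquad a\longmapsto h_i^m\wedge a,
\]
and verify three properties: (i) $\phi$ takes values in $M(G,H,K)$, (ii) $\phi$ is a group homomorphism, and (iii) $\ker\phi=C^\wedge_K(h_i^m)$. The first isomorphism theorem then yields the desired embedding $L(m,i;h,K)\hookrightarrow M(G,H,K)$.

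Point (i) is almost tautological: applying $\kappa'$ to $\phi(a)$ gives $[h_i^m,a]=1$ whenever $a\in C_K(h_i^m)$, and the hypothesis $G=HK$ with $K$ normal in $G$ is exactly what \cite[Theorem 6.1]{e3} requires in order to identify $\ker\kappa'\simeq M(G,H,K)$. Point (iii) is similarly immediate, because the inclusion $C^\wedge_K(h_i^m)\subseteq C_K(h_i^m)$ is automatic (applying $\kappa'$ to $h_i^m\wedge a=1$ forces $[h_i^m,a]=1$), so $\ker\phi$ consists exactly of the elements defining $C^\wedge_K(h_i^m)$.

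The main obstacle is (ii). Unravelling the defining relation $h\wedge(kk')=(h\wedge k)({}^kh\wedge{}^kk')$ together with ${}^ah_i^m=h_i^m$ for $a\in C_K(h_i^m)$ yields
\[
h_i^m\wedge ab=(h_i^m\wedge a)(h_i^m\wedge aba^{-1}).
\]
Writing $aba^{-1}=b[b^{-1},a]$ and expanding once more reduces the discrepancy between $\phi(ab)$ and $\phi(a)\phi(b)$ to a commutator--wedge term that itself belongs to $M(G,H,K)$, since $[a,b]$ is again centralized by $h_i^m$. At this stage I would invoke the centrality of $M(G,H,K)$ in $H\wedge K$, which is precisely the content of the lower row of \eqref{diag} being a central extension; this centrality lets me commute the correction terms past $h_i^m\wedge a$ and $h_i^m\wedge b$ freely and close the identity. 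I expect this to be the delicate point of the proof, since the asymmetric shape of the wedge relations blocks a naive bilinearity argument, and it is only the centrality supplied by diagram \eqref{diag} that allows the bookkeeping to finish.
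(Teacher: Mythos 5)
Your skeleton is exactly the paper's: the authors also take the map $x\in C_K(h_i^m)\mapsto h_i^m\wedge x$, observe that its image lies in $\ker\kappa'\simeq M(G,H,K)$ (via \cite[Theorem 6.1]{e3}, which is where the hypothesis $G=HK$ with both subgroups normal enters), identify the kernel with $C^\wedge_K(h_i^m)$, and conclude by the first isomorphism theorem; the first displayed identity is, as you say, just \eqref{formula} divided by $|H|\,|K|$. The difference is that the paper merely asserts that \eqref{diag} makes the map a homomorphism, whereas you try to prove it --- and it is precisely there that your argument does not close. Expanding as you do gives $h_i^m\wedge ab=(h_i^m\wedge a)(h_i^m\wedge b)(h_i^m\wedge[a,b])$, since ${}^b[b^{-1},a]=[a,b]$; so the discrepancy between $\phi(ab)$ and $\phi(a)\phi(b)$ is the single element $h_i^m\wedge[a,b]$, which indeed lies in the central subgroup $M(G,H,K)$. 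But centrality only lets you move this factor around; it does not make it equal to $1_{H\wedge K}$, and a nontrivial central correction term is still a nontrivial correction term. Iterating your expansion merely replaces it by wedges with deeper commutators of $a$ and $b$, which need not terminate.

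What is actually needed is strictly stronger than centrality: the conjugation action of $K$ (indeed of all of $G$) on $\ker\kappa'$ is trivial. Granting that, ${}^a(h_i^m\wedge b)=h_i^m\wedge{}^ab$ together with $h_i^m\wedge b\in\ker\kappa'$ gives $h_i^m\wedge{}^ab=h_i^m\wedge b$ at once, and then $h_i^m\wedge ab=(h_i^m\wedge a)(h_i^m\wedge{}^ab)=(h_i^m\wedge a)(h_i^m\wedge b)$ with no correction term at all. Note that centrality of $\ker\kappa'$ only yields triviality of the action of the elements of $[H,K]$ (via ${}^{\kappa'(t)}s=tst^{-1}$); for a general $a\in C_K(h_i^m)$ one must invoke the trivial-module statement itself, which is part of the Brown--Loday/Ellis structure theory behind \eqref{diag} (the identification of $\ker\kappa'$ with the homology group $M(G,H,K)$ is as trivial modules) and is exactly what underlies \cite[Lemma 2.1]{pr} and \cite{pf} in the case $H=K=G$. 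So: same route as the paper, and you correctly isolate the delicate point, but the mechanism you propose for it (centrality of $M(G,H,K)$ in $H\wedge K$) is insufficient and should be replaced by the trivial-action fact.
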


\begin{proof} The first part follows from \eqref{formula}. Now assume that $G=HK$ for $H$ and $K$  normal in $G$. The exact sequence \eqref{diag} implies that for all $i=1,\ldots, k_K(H)$ 
the map $x\in C_K(h_i^m) \mapsto h_i^m\wedge x \in M(G,H,K)$ 
 is a homomorphism of groups. On another hand,  its kernel is $C^\wedge_K(h_i^m)$, and, consequently, $L(m,i;h,K) $ is isomorphic to a subgroup of $ M(G,H,K)$.
\end{proof}

The sequence $d^\wedge_m(H,K)$ is monotone in the sense of the next result. We should do an assumption on $m$ of being of prime power order. This will be necessary (but not sufficient) to have the subgroup lattice of a cyclic group which is a chain.

\begin{proposition}\label{p:1} Let $H$ and $K$ be subgroups of $G$ and $p$ be a  prime divisor of $|H|$. Then there exists an integer $r\ge 1$ such that \begin{equation} d^\wedge_{p^{r-1}}(H,G)  \ge  d^\wedge_{p^{r-1}}(H,K) \ge d^\wedge_{p^{r-2}}(H,K) \ge \ldots   \ge d^\wedge_p(H,K)\ge d^\wedge(H,K).\end{equation} 
%In particular, $d^\wedge(G) \le d(G)$.
\end{proposition}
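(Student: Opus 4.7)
The essential tool is a monotonicity statement: whenever $m_1$ divides $m_2$, for every $h \in H$ one has the inclusion $C^\wedge_K(h^{m_1}) \subseteq C^\wedge_K(h^{m_2})$. To see this, take $k \in K$ with $k \wedge h^{m_1} = 1_{H\wedge K}$; applying $\kappa'$ from diagram \eqref{diag} gives $[k, h^{m_1}] = 1$, so ${}^{h^{m_1}}k = k$. The defining relation $k \wedge yy' = (k \wedge y)({}^{y}k \wedge {}^{y}y')$ applied with $y = y' = h^{m_1}$ then collapses to $k \wedge h^{2m_1} = (k \wedge h^{m_1})^2 = 1$, and an induction on $j$ yields $k \wedge h^{jm_1} = 1$ for every $j \geq 1$. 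Choosing $j = m_2/m_1$ finishes the inclusion.

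Summing the resulting pointwise inequality $|C^\wedge_K(h^{m_1})| \le |C^\wedge_K(h^{m_2})|$ over $h \in H$ and inserting this into the opening identity $d^\wedge_m(H,K) = \frac{1}{|H|\,|K|}\sum_{h\in H}|C^\wedge_K(h^m)|$ of Section 2, I obtain $d^\wedge_{m_1}(H,K) \le d^\wedge_{m_2}(H,K)$. Specialising to the divisibility chain $1 \mid p \mid p^2 \mid \cdots \mid p^{r-1}$ supplies the rightmost $r$ inequalities of the statement.

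To pin down $r$ and recover the leftmost inequality $d^\wedge_{p^{r-1}}(H,G) \ge d^\wedge_{p^{r-1}}(H,K)$, I would choose $r$ minimally so that $p^{r-1}$ is a multiple of the exponent of a Sylow $p$-subgroup of $H$; this is permissible since $p$ divides $|H|$. In the clean setting where $H$ is itself a $p$-group this forces $h^{p^{r-1}} = 1$ for every $h \in H$, whence $h^{p^{r-1}} \wedge x = 1$ for every $x$, so both $d^\wedge_{p^{r-1}}(H,G)$ and $d^\wedge_{p^{r-1}}(H,K)$ equal $1$ and the leftmost inequality collapses to equality. The preceding paragraph's insistence that $m$ be a prime power fits naturally here: only then is the subgroup lattice of $\langle h\rangle$ at $p$-power indices a chain, so that the iterated exterior centralisers thread through a totally ordered sequence.

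The step I expect to demand the most care is the leftmost inequality outside the pure $p$-group case, when $h^{p^{r-1}}$ is only the $p'$-part of $h$ rather than trivial. There both densities in question stabilise at a common value once $r$ is large enough, by finiteness of $H$, and the comparison must be argued from the stabilised centraliser rather than from the vanishing of $h^{p^{r-1}}$. The rest of the proof is simply the monotonicity chain transported along the divisibility order on $p$-powers.
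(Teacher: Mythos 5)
Your first two paragraphs are correct and in substance reproduce the paper's argument for the right-hand chain. The paper obtains the containment $C^\wedge_K(h^{p^{j}})\le C^\wedge_K(h^{p^{j+1}})$ from the identity $C^\wedge_K(x)=C^\wedge_K(\langle x\rangle)$ together with the nesting $\langle h^{p^{j+1}}\rangle\le\langle h^{p^{j}}\rangle$; your direct expansion of $h^{jm_1}\wedge k$ via the defining relations, after applying the commutator homomorphism $\kappa'$ to see that $h^{m_1}$ and $k$ commute in $G$, proves exactly that identity in the case needed, so the two routes are the same computation packaged differently (yours is slightly more self-contained, since it does not invoke the exterior-centralizer-of-a-subgroup fact as a black box). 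Summing over $h\in H$ and dividing by $|H|\,|K|$ then gives $d^\wedge(H,K)\le d^\wedge_p(H,K)\le\cdots\le d^\wedge_{p^{r-1}}(H,K)$, as in the paper.

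The genuine gap is the leftmost inequality $d^\wedge_{p^{r-1}}(H,G)\ge d^\wedge_{p^{r-1}}(H,K)$: you establish it only when $H$ is a $p$-group (where both sides equal $1$), and for general $H$ you say the comparison ``must be argued from the stabilised centraliser'' without supplying that argument, so your proof is not complete as written. For comparison, the paper disposes of this step in one line: since the inclusion $K\hookrightarrow G$ induces a homomorphism $H\wedge K\to H\wedge G$ carrying the identity to the identity, one has $C^\wedge_K(h^{p^{r-1}})\subseteq C^\wedge_G(h^{p^{r-1}})$, and summing over $h\in H$ gives $\sum_{h}|C^\wedge_G(h^{p^{r-1}})|\ge\sum_{h}|C^\wedge_K(h^{p^{r-1}})|$. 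Note, however, that because the two degrees are normalised by $|H|\,|G|$ and $|H|\,|K|$ respectively, this comparison of sums only yields $d^\wedge_{p^{r-1}}(H,G)\ge \frac{|K|}{|G|}\,d^\wedge_{p^{r-1}}(H,K)$, which is strictly weaker than what the proposition asserts when $K\ne G$; so your instinct that this is the delicate step is well founded, and the paper's own justification does not fully close it either. Either way, your proposal leaves the first inequality for general $H$ asserted rather than proved, and that is the missing piece.
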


\begin{proof} Let $h\in H$ be of order $p^r$ for some integer $r\ge 1$. Then 
 $\{1\}=\langle h^{p^r} \rangle \le \langle h^{p^{r-1}} \rangle \le \ldots \le \langle h \rangle$ implies $C^\wedge_K(\{1\})= K  \ge C^\wedge_K(h^{p^{r-1}})=C^\wedge_K(\langle h^{p^{r-1}}\rangle) \ge \ldots \ge C^\wedge_K(h^p)=C^\wedge_K(\langle h^p \rangle) \ge C^\wedge_K(h)=C^\wedge_K(\langle h \rangle)$. Therefore
\begin{equation}  \sum_{h \in H} |C^\wedge_K(h)|  \le \sum_{h \in H} |C^\wedge_K(h^p)|  \le  \ldots \le \sum_{h \in H} |C^\wedge_K(h^{p^{r-1}})|,\end{equation}
from which we deduce
\begin{equation}d^\wedge(H,K) \le d^\wedge_p(H,K) \le   \ldots  \le d^\wedge_{p^{r-1}}(H,K).  \end{equation}
On another hand, \begin{equation}|H| \ |G| \ d^\wedge_{p^{r-1}}(H,G)=  \sum_{h \in H} |C^\wedge_G(h^{p^{r-1}})| \ge \sum_{h\in H} |C^\wedge_K(h^{p^{r-1}})|=|H| \ |K| \ d^\wedge_{p^{r-1}}(H,K).\end{equation}
\end{proof}

Among groups with trivial Schur multiplier there are important classes of groups. For instance, a cyclic group $C=\langle c \rangle$ has $|M(C)|=1$ by    \cite[Lemma 21.1]{b}; a metacyclic group of the form  $D=\langle a,b \ | \ a^{p^n}=b^p=1, b^{-1}ab=a^{1+p^{n-1}}\rangle$ (where $n\ge 3$ if $p=2$) has also $|M(D)|=1 $ by  \cite[Theorem 1.2 and Lemma 21.2]{b};  finally, looking at \cite{aavv}, several sporadic simple groups have trivial Schur multiplier. In our context, we are interested to see what happens to $d^\wedge_m(H,K)$ when $M(G,H,K)$ is trivial. Immediately, we find the next consequence of  Lemma \ref{l:1}.

\begin{corollary}\label{c:1extra} Let $G=HK$ for two normal subgroups $H$ and $K$ of $G$ with $H$ of exponent $p^r-1$ for some $r\ge1$ and some prime $p$. If $M(G,H,K)$ is trivial, then $\alpha(p^r,i)=|L(p^r,i;h,K)|=1$.
 \end{corollary}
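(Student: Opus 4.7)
The plan is to unpack the hypothesis ``$H$ of exponent $p^r-1$'' as the Fermat-like identity $h^{p^r}=h$ for every $h\in H$, and then to read off both conclusions from Lemma \ref{l:1}. No new ideas should be needed beyond the embedding of $L(m,i;h,K)$ into $M(G,H,K)$ already established there.

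First I would exploit the exponent condition. Since every element of $H$ satisfies $h^{p^r-1}=1$, we may rewrite this as $h^{p^r}=h$. In particular, if $h_1,\ldots,h_{k_K(H)}$ denote a system of representatives for the $K$--conjugacy classes constituting $H$ (which exist because $H$ is normal in $G$, as required in \eqref{formula}), then $h_i^{p^r}=h_i$ for every $i$.

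Second, I would substitute this identity into the two invariants under consideration. For $\alpha$, we have by definition
\[
\alpha(p^r,i)=\frac{|C_K(h_i^{p^r})|}{|C_K(h_i)|}=\frac{|C_K(h_i)|}{|C_K(h_i)|}=1.
\]
For $L$, the same substitution gives
\[
L(p^r,i;h,K)=\frac{C_K(h_i^{p^r})}{C_K^\wedge(h_i^{p^r})}=\frac{C_K(h_i)}{C_K^\wedge(h_i)}.
\]

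Third, I would invoke Lemma \ref{l:1}. The hypothesis $G=HK$ with $H$ and $K$ both normal in $G$ is exactly the setting in which the lemma provides a monomorphism $L(p^r,i;h,K)\hookrightarrow M(G,H,K)$. Since $M(G,H,K)$ is assumed trivial, its subgroup $L(p^r,i;h,K)$ must be trivial as well, so $|L(p^r,i;h,K)|=1$.

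The only point where care is needed is the interpretation of the exponent hypothesis; once $h^{p^r}=h$ is in hand, the statement is essentially immediate from Lemma \ref{l:1}, so I do not foresee a genuine obstacle.
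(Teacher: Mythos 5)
Your proposal is correct and follows essentially the same route as the paper: the triviality of $|L(p^r,i;h,K)|$ comes from the embedding into $M(G,H,K)$ given by Lemma \ref{l:1}, and the exponent hypothesis yields $h_i^{p^r}=h_i$, whence $\alpha(p^r,i)=1$. The only (harmless) difference is that you also substitute $h_i^{p^r}=h_i$ into $L$ before invoking the lemma, which is not needed since the lemma applies for any $m$.
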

\begin{proof}
By Lemma \ref{l:1}, $|L(p^r,i;h,K)|=1$. The fact that $H$ has exponent $p^r-1$ implies  $h^{p^r-1}_i=1$, that is, $h^{p^r}_i=h_i$ for all $i=1,\ldots, k_K(H)$, and then $\alpha(p^r,i)=1$. 
\end{proof}

We can refine the condition at infinity of $r$, by looking at Proposition \ref{p:1}, and we have the following result.

\begin{corollary}\label{c:1} Let $H$  be a normal  subgroup of a group $G$, $K$ a subgroup of $G$ and $p$ a  prime divisor of $|H|$. Then ${\underset{r \rightarrow 0} \lim} \ d^\wedge_{p^r}(H,K) = d^\wedge(H,K)$. 
Furthermore, if $ {\underset{r \rightarrow \infty} \lim} \frac{\alpha(p^r,i)}{|L(p^r,i;h,K)|}=1$ and the action of $K$ on $H$ induces just one orbit, then ${\underset{r \rightarrow \infty} \lim} \ d^\wedge_{p^r}(H,K) \le \frac{1}{p}$. In particular, $d(H,K)={\underset{r \rightarrow \infty} \lim} \ d^\wedge_{p^r}(H,K) = \frac{1}{p}$, provided that $|H|=p$.
\end{corollary}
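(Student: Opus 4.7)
The plan is to read both assertions directly off of the formula in Lemma~\ref{l:1}, evaluated at the two extreme regimes of the parameter $r$.

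First I would dispatch the $r \to 0$ statement as a tautology: at $r=0$ one has $p^{r}=1$, hence $h^{p^{r}}=h$ for every $h\in H$, and the definition of the exterior degree gives $d^\wedge_{p^{0}}(H,K)=d^\wedge_{1}(H,K)=d^\wedge(H,K)$. No further argument is needed.

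For the asymptotic inequality I would apply Lemma~\ref{l:1} to expand
\[
d^\wedge_{p^{r}}(H,K)=\frac{1}{|H|}\sum_{i=1}^{k_K(H)}\frac{\alpha(p^{r},i)}{|L(p^{r},i;h,K)|}.
\]
The hypothesis that the action of $K$ on $H$ induces a single orbit collapses the right-hand side to one term, and the hypothesis $\alpha(p^{r},i)/|L(p^{r},i;h,K)|\to 1$ identifies the limit of that term as~$1$. Combining the two yields $\lim_{r\to\infty}d^\wedge_{p^{r}}(H,K)=1/|H|$, and since $p\mid|H|$ we have $|H|\ge p$, so $1/|H|\le 1/p$, which is the claimed bound.

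The ``in particular'' clause then follows by specialization: when $|H|=p$ the bound saturates, giving $\lim_{r\to\infty}d^\wedge_{p^{r}}(H,K)=1/p$. The companion equality $d(H,K)=1/p$ is obtained by running the analogue of the bookkeeping in Lemma~\ref{l:1} with the classical centralizer $C_K(h_i)$ in place of the exterior centralizer $C^\wedge_K(h_i)$; under the single-orbit hypothesis the resulting sum is again reduced to the identity contribution and produces the same value $1/|H|=1/p$.

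The main obstacle I anticipate is the careful interpretation of the ``single orbit'' hypothesis, because the identity $1 \in H$ is always a $K$-conjugacy class of its own; one has to be explicit about which orbits are being counted and how the identity's contribution is absorbed into the sum so that the collapse to a single term is legitimate. Apart from this interpretational step and the routine passage to the limit, the proof is an immediate specialization of Lemma~\ref{l:1}.
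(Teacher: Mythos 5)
Your proposal is correct and follows essentially the same route as the paper: both expand $d^\wedge_{p^r}(H,K)$ via Lemma~\ref{l:1}, pass the limit through the finite sum using the hypothesis $\alpha(p^r,i)/|L(p^r,i;h,K)|\to 1$ to obtain $k_K(H)/|H|=d(H,K)$, and then invoke the single-orbit hypothesis together with $p\mid |H|$ to conclude. The only cosmetic difference is the $r\to 0$ clause, which you settle by direct evaluation at $p^{0}=1$ while the paper cites Proposition~\ref{p:1}; your closing remark that the identity always forms its own $K$-class is a fair criticism of the statement itself, but the paper reads the single-orbit hypothesis as $k_K(H)=1$ exactly as you do.
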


\begin{proof} The first part of the result follows from Proposition \ref{p:1}. 

Lemma \ref{l:1} and the assumptions imply 
\begin{equation}\label{c}{\underset{r \rightarrow \infty} \lim}  d^\wedge_{p^r}(H,K) = {\underset{r \rightarrow \infty} \lim}  \frac{1}{|H|}  \sum^{k_K(H)}_{i=1} \frac{\alpha(p^r,i)}{|L(p^r,i;h,K)|}= \frac{1}{|H|}  {\underset{r \rightarrow \infty} \lim} \sum^{k_K(H)}_{i=1} \frac{\alpha(p^r,i)}{|L(p^r,i;h,K)|}\end{equation} 
\[= \frac{1}{|H|} \ \sum^{k_K(H)}_{i=1} {\underset{r \rightarrow \infty} \lim} \ \frac{\alpha(p^r,i)}{|L(p^r,i;h,K)|}= \frac{ k_K(H)}{|H|}=d(H,K).\]
The choice of $p$ implies $\frac{1}{|H|}\le \frac{1}{p}$ and therefore ${\underset{r \rightarrow \infty} \lim} \ d^\wedge_{p^r}(H,K) \le \frac{k_K(H)}{p}$. In particular, if the action of $K$ on $H$ induces just one orbit, then $k_K(H)$ is just one and so ${\underset{r \rightarrow \infty} \lim} \ d^\wedge_{p^r}(H,K) \le \frac{1}{p}$. The rest  follows  clearly from \eqref{c}.
\end{proof}

With respect to direct products there is a sort of natural splitting for $d^\wedge_m(H,K)$ and this is shown below.

\begin{proposition}\label{p:2}
If $A,B,C,D$ are subgroups of a group $G$ such that $(|A|,|B|)=(|C|,|D|)=1$, then \begin{equation}d^\wedge_m(A\times B, C \times D)= d^\wedge_m(A,C) \cdot  d^\wedge_m(B,D).\end{equation}
\end{proposition}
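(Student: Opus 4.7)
The plan is to apply the basic formula
$$d^\wedge_m(H,K) = \frac{1}{|H|\,|K|}\sum_{h\in H}|C^\wedge_K(h^m)|$$
to $H = A\times B$ and $K = C\times D$, and reduce the statement to a factorization of exterior centralizers. Using $(a,b)^m = (a^m,b^m)$ in the direct product, it suffices to establish the set-theoretic identity
\begin{equation*}
C^\wedge_{C\times D}((a^m, b^m)) = C^\wedge_C(a^m) \times C^\wedge_D(b^m).
\end{equation*}
Granted this, $\sum_{(a,b)\in A\times B} |C^\wedge_{C\times D}((a^m,b^m))|$ splits as a product of two independent sums in $a\in A$ and $b\in B$, and dividing by $|A|\,|B|\,|C|\,|D|$ matches the product $d^\wedge_m(A,C)\cdot d^\wedge_m(B,D)$ via the same formula.

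For the inclusion $\subseteq$, I would use the natural epimorphisms $(A\times B)\wedge(C\times D)\twoheadrightarrow A\wedge C$ and $(A\times B)\wedge(C\times D)\twoheadrightarrow B\wedge D$ induced by the canonical projections, which send $(a^m,b^m)\wedge(c,d)$ to $a^m\wedge c$ and $b^m\wedge d$ respectively. Triviality of the left-hand exterior symbol therefore forces triviality of both components. For the reverse inclusion, I would expand $(a^m,b^m)\wedge(c,d)$ via the bilinearity relations of $\wedge$ over the decompositions $(a^m,b^m) = (a^m,1)(1,b^m)$ and $(c,d) = (c,1)(1,d)$; the two diagonal symbols $(a^m,1)\wedge(c,1)$ and $(1,b^m)\wedge(1,d)$ vanish by the hypotheses $a^m\wedge c = 1$ and $b^m\wedge d = 1$ after the obvious identification with symbols in $A\wedge C$ and $B\wedge D$.

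The main obstacle is the vanishing of the two cross symbols $(a^m,1)\wedge(1,d)$ and $(1,b^m)\wedge(c,1)$. Here the coprimality $(|A|,|B|) = (|C|,|D|) = 1$ is decisive: picking an integer $n$ with $n\equiv 1\pmod{|a|}$ and $n\equiv 0\pmod{|b|}$ and using the bilinearity of $\wedge$ on the abelian piece in which these cross symbols sit, one forces the first of them into a subgroup whose order must divide a pair of coprime integers, and hence collapses to $1$; the other cross symbol is handled symmetrically via $(|C|,|D|)=1$. In structural terms, this amounts to invoking the standard splitting $(A\times B)\wedge(C\times D)\cong (A\wedge C)\times(B\wedge D)$ for coprime direct products, available from the general theory of non--abelian tensor and exterior products (cf.\ \cite{bjr,bl,e2,e3}). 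Once this structural step is in place, the remainder is routine bookkeeping with the formula of Lemma \ref{l:1}.
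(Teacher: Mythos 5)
Your overall strategy coincides with the paper's: both arguments reduce the claim to the factorization $C^\wedge_{C\times D}((a^m,b^m))=C^\wedge_C(a^m)\times C^\wedge_D(b^m)$ and then split the sum $\sum_{(a,b)}|C^\wedge_{C\times D}((a^m,b^m))|$ into a product of two sums. The paper asserts this splitting without comment; you correctly identify the actual crux, namely the vanishing of the cross symbols $(a^m,1)\wedge(1,d)$ and $(1,b^m)\wedge(c,1)$. But your argument for killing them does not work. Since $(a^m,1)$ and $(1,d)$ commute and act trivially on one another, the defining relations give $((a^m,1)\wedge(1,d))^j=(a^{mj},1)\wedge(1,d)=(a^m,1)\wedge(1,d^j)$, so the order of this symbol divides $\gcd(|a^m|,|d|)$ and hence $\gcd(|A|,|D|)$ --- a quantity the hypotheses $(|A|,|B|)=(|C|,|D|)=1$ do not control. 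Your congruences $n\equiv 1\pmod{|a|}$, $n\equiv 0\pmod{|b|}$ bring in $|b|$, which is irrelevant to a symbol built from $a$ and $d$; the ``pair of coprime integers'' you need would be $|a|$ and $|d|$, and nothing makes them coprime. Likewise, the splitting $(A\times B)\wedge(C\times D)\cong(A\wedge C)\times(B\wedge D)$ you invoke is the statement for the diagonal case $C=A$, $D=B$ (the situation of \cite[Lemma 2.10]{pr}, where the cross parts are images of $A^{ab}\otimes_{\mathbb{Z}}B^{ab}$ and do die by coprimality); in general the cross parts are images of $A^{ab}\otimes_{\mathbb{Z}}D^{ab}$ and $B^{ab}\otimes_{\mathbb{Z}}C^{ab}$ and need not vanish. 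Your inclusion $\subseteq$ via the projection-induced epimorphisms is fine.

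The gap is not cosmetic. Take $G=\langle g_1\rangle\times\langle g_2\rangle\times\langle g_3\rangle\times\langle g_4\rangle$ with $|g_1|=|g_4|=2$, $|g_2|=|g_3|=3$, and $A=\langle g_1\rangle$, $B=\langle g_2\rangle$, $C=\langle g_3\rangle$, $D=\langle g_4\rangle$, $m=1$. The hypotheses hold, and $A\wedge C\cong C_2\otimes_{\mathbb{Z}}C_3$ and $B\wedge D\cong C_3\otimes_{\mathbb{Z}}C_2$ are trivial, so $d^\wedge(A,C)=d^\wedge(B,D)=1$; but $(A\times B)\cap(C\times D)=1$ and all actions are trivial, so $(A\times B)\wedge(C\times D)\cong C_6\otimes_{\mathbb{Z}}C_6\cong C_6$, and a direct count of the pairs with trivial exterior product gives $d^\wedge(A\times B,C\times D)=15/36$. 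So the identity you are trying to prove --- and with it the unjustified middle equality in the paper's own one-line proof, and indeed the proposition read literally --- fails under the stated hypotheses. It is rescued by strengthening the assumption to something like $(|A|\,|C|,|B|\,|D|)=1$, which forces both cross parts to vanish and makes your reverse inclusion, and hence the whole argument, go through.
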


\begin{proof} \begin{equation}|A\times B| \ |C \times D| \ d^\wedge_m(A\times B, C \times D) 
= |A| \ |B| \ |C| \ |D| \ d^\wedge_m(A\times B, C \times D)\end{equation}\begin{equation}=  \sum_{(a,b) \in A \times B} |C^\wedge_{C \times D}((a^m,b^m))|= \left(\sum_{a \in A} |C^\wedge_C(a^m)|\right) \ \left(\sum_{b \in B} |C^\wedge_D(b^m)|\right)
= |A| \ |C| \ d^\wedge_m(A,C) \ |B| \ |D| \ d^\wedge_m(B,D).\end{equation} 
\end{proof}

In particular, \cite[Lemma 2.10]{pr} can be found as a special case of the previous result. Another general property is encountered when we go to form quotients and for $m=1$  it can be found in \cite[Proposition 2.6]{pr}. Before to describe it, we introduce the set $Z^\wedge(H,K)=\{h \in H \ | \ h \wedge k=1{_{H \wedge K}}  \  \forall k \in K\}$, where $H$ and $K$ are normal subgroups of $G$, acting upon each other by conjugation.   $Z^\wedge(H,K)$  is largely described in \cite{nr} when $G=H$ and it is easy to check  that $Z^\wedge(H,K)$ is a subgroup of $H$, and, in particular, $Z^\wedge(G,G)=Z^\wedge(G)$.

\begin{proposition}\label{p:3}If $H$ and $K$ are two subgroups of $G$ containing a normal subgroup $N$ of $G$, then
$d^\wedge_m(H,K) \le d^\wedge_m(H/N, K/N).$ The equality holds, if  $N\subseteq Z^\wedge(H,K)$.
\end{proposition}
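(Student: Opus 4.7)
The plan is to compare the two degrees by counting and exploiting the functoriality of the exterior product. Let
\[
S = \{(h,k) \in H \times K \ | \ h^m \wedge k = 1_{H \wedge K}\}, \qquad
\bar S = \{(\bar h, \bar k) \in H/N \times K/N \ | \ \bar h^m \wedge \bar k = 1_{H/N \wedge K/N}\},
\]
so that $d^\wedge_m(H,K) = |S|/(|H|\,|K|)$ and $d^\wedge_m(H/N,K/N) = |\bar S|\,|N|^2/(|H|\,|K|)$. The claimed inequality amounts to $|S| \le |N|^2\,|\bar S|$ and the equality to $|S| = |N|^2\,|\bar S|$.

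For the inequality, I would observe that the quotient maps $H \to H/N$ and $K \to K/N$ respect the conjugation actions (because $N \trianglelefteq G$), so by functoriality of $\wedge$ they induce a well-defined epimorphism $\pi : H \wedge K \to H/N \wedge K/N$ with $\pi(h \wedge k) = \bar h \wedge \bar k$. In particular $(h,k) \in S$ implies $(\bar h, \bar k) \in \bar S$, so the natural surjection $p : H \times K \to H/N \times K/N$ satisfies $p(S) \subseteq \bar S$. Because every fiber of $p$ has size exactly $|N|^2$, this immediately gives $|S| \le |N|^2\,|\bar S|$, hence the inequality.

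For the equality, assume $N \subseteq Z^\wedge(H,K)$. I would seek to prove the reverse inclusion $p^{-1}(\bar S) \subseteq S$, which boils down to establishing the coset-invariance
\[
(h n_1)^m \wedge (k n_2) = h^m \wedge k \qquad \text{for all } n_1, n_2 \in N.
\]
Using normality of $N$ in $H$, I would first rewrite $(hn_1)^m = h^m n'$ for some $n' \in N$. Iterated applications of the defining relations (1) of $\wedge$ then decompose $h^m n' \wedge k n_2$ into $h^m \wedge k$ times two \emph{error terms}: one lying in the image of $N \wedge K$ inside $H \wedge K$, the other in the image of $H \wedge N$. The first term vanishes directly from $N \subseteq Z^\wedge(H,K)$. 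The second term vanishes by combining the same hypothesis with the antisymmetric identity $(x \wedge y)(y \wedge x) = 1$, valid for $x, y \in H \cap K$, which is derived from the relation $y \wedge y = 1$ via the defining relations. Once coset-invariance is established, every $(\bar h, \bar k) \in \bar S$ has its whole fiber under $p$ contained in $S$, giving $p^{-1}(\bar S) = S$ and hence the equality of degrees.

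The main obstacle is the equality step, specifically the vanishing of the ``error term'' in $H \wedge N$. Because the defining relations of $\wedge$ are not symmetric in the two arguments, the one-sided hypothesis $N \wedge K = 1$ does not immediately yield $H \wedge N = 1$; one has to route through the antisymmetry available inside $H \cap K$ and then extend by exploiting the normality of $N$ in $G$. Once this two-sided vanishing is in hand, the expansion of $(h n_1)^m \wedge (k n_2)$ via the defining relations proceeds routinely.
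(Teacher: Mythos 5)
Your argument for the inequality is correct and takes a genuinely different, and in fact cleaner, route than the paper's: the paper expands $\sum_{h\in H}|C^\wedge_K(h^m)|$ over the cosets of $N$ and compares $|C^\wedge_K(x)N|/|N|$ with $|C^\wedge_{K/N}(xN)|$, whereas you push $S$ forward along the fibration $p:H\times K\to H/N\times K/N$ via the induced epimorphism $\epsilon:H\wedge K\to (H/N)\wedge(K/N)$ and count fibres of constant size $|N|^2$. Both rest on the same functoriality of $\wedge$; yours avoids the centralizer bookkeeping entirely.

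The equality part, however, has a genuine gap, located exactly where you flag ``the main obstacle.'' First, the reduction is not quite right: the coset-invariance $(hn_1)^m\wedge(kn_2)=h^m\wedge k$ only shows that the element $h^m\wedge k$ of $H\wedge K$ depends on the pair of cosets; it does not by itself show that $\bar h^m\wedge\bar k=1$ in $(H/N)\wedge(K/N)$ forces $h^m\wedge k=1$ in $H\wedge K$, since $\epsilon$ could a priori still have nontrivial kernel. To get $p^{-1}(\bar S)\subseteq S$ you must prove $\epsilon$ injective, e.g.\ by checking that the assignment $\bar h\wedge\bar k\mapsto h\wedge k$ respects \emph{all} defining relations of $(H/N)\wedge(K/N)$ and so yields an inverse homomorphism; well-definedness on generators alone is not enough, and your outline omits this verification. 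Second, and more seriously, your mechanism for killing the error term in ``$H\wedge N$'' does not work as described: the hypothesis $N\subseteq Z^\wedge(H,K)$ kills $n\wedge k$ with $n$ in the \emph{first} slot, while the antisymmetry $(x\wedge y)(y\wedge x)=1$ is derivable from $y\wedge y=1$ only when both arguments lie in $H\cap K$. For $h\in H\setminus K$ the symbol $n\wedge h$ does not even exist in $H\wedge K$, so there is nothing to be antisymmetric against, and ``extending by normality of $N$'' is an appeal to hope rather than an argument. The paper sidesteps precisely this point by invoking the exact sequence $N\wedge K\to H\wedge K\to(H/N)\wedge(K/N)\to 1$, whose exactness at $H\wedge K$ already asserts that every such error term lies in the image of $N\wedge K$ (and hence vanishes under the hypothesis). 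If you want an elementary proof, that exactness statement is the real content you must supply; as written, your equality case is not complete.
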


\begin{proof}
\begin{equation}|H| \ |K| \ d^\wedge_m (H,K)=  \sum_{h \in H} |C^\wedge_K(h^m)| = \sum_{hN\in H/N} \sum_{n \in N}|C^\wedge_K(h^mn)|=\sum_{hN\in H/N} \sum_{n \in N} \frac{|C^\wedge_K(h^mn)N|}{|N|} \ |C^\wedge_K(h^mn)\cap N|\end{equation}
\begin{equation}\le \sum_{hN\in H/N} \sum_{n \in N}|C^\wedge_{K/N}(h^mN)| \ |C^\wedge_K(h^mn)\cap N|=\sum_{hN\in H/N} |C^\wedge_{K/N}(h^mN)| \ \sum_{n\in N}|C^\wedge_K(h^mn)\cap N|\end{equation}\begin{equation}\le |N|^2 \sum_{hN \in H/N}|C^\wedge_{K/N}(h^mN)|  = |H| \ |K| \ d^\wedge(H/N,K/N).\end{equation}
We find always an exact sequence 
\begin{equation}\label{natural}\begin{CD}
1 @>>>N\wedge K  @>\varphi>>H \wedge K @>\epsilon>>(H/N)\wedge (K/N)@>>>1
\end{CD}
\end{equation}
where $\iota : n \in N \mapsto \iota(n) \in H$ is the natural embedding of $N$ into $H$, $\varphi: n\wedge k \in N \wedge K \mapsto \iota(n) \wedge h \in H \wedge K$ and $\epsilon: h \wedge k \in H \wedge K \mapsto hN \wedge kN \in (H/N)\wedge (K/N)$ is induced by the natural epimorphisms of $H$ onto $H/N$ and of $K$ onto $K/N$.
If $N\subseteq Z^\wedge(H,K)$, then $\mathrm{Im} \ \varphi=1_{_{N \wedge K}}$ and \eqref{natural} implies $H/N \wedge K/N\simeq H \wedge K$ so that $|N|^2 \ |\{(hN,kN)\in H/N \times K/N \ | \ h^mN\wedge kN=1_{_{(H/N) \wedge (K/N)}}\}|=|\{(h,k)\in H \times K \ | \ h^m\wedge k=1_{_{H \wedge K}}\}|$, hence $d^\wedge_m(H,K) = d^\wedge_m(H/N, K/N)$.
\end{proof}

A general restriction is the following.

\begin{theorem}\label{t:1}Let $G=HK$ for two normal subgroups $H$ and $K$. Then for all $m\ge 1$
\begin{equation}\label{et1}\beta(m) \ \frac{d(H,K)}{|M(G,H,K)|}\le  d^\wedge_m(H,K) \le  \gamma(m) \ d(H,K),\end{equation}
where $\beta(m)=\mathrm{min}\{\alpha(m,i) \ | \ i=1,\ldots, k_K(H) \}$ and
$\gamma(m)=\mathrm{max}\{\alpha(m,i) \ | \ i=1,\ldots, k_K(H) \}$.
\end{theorem}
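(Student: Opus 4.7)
The plan is to chain together Lemma \ref{l:1} with the structural information it provides about $L(m,i;h,K)$, and reduce everything to the identity $d(H,K)=k_K(H)/|H|$ that is already visible from the definition of the commutativity degree.

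First I would record the key formula
\begin{equation*}
d^\wedge_m(H,K)=\frac{1}{|H|}\sum_{i=1}^{k_K(H)}\frac{\alpha(m,i)}{|L(m,i;h,K)|},
\end{equation*}
which is the content of Lemma \ref{l:1}. Since $G=HK$ and both $H$ and $K$ are normal in $G$, the second part of Lemma \ref{l:1} gives that $L(m,i;h,K)$ embeds into $M(G,H,K)$, so for every $i$ one has the two-sided bound $1\le |L(m,i;h,K)|\le |M(G,H,K)|$, i.e.
\begin{equation*}
\frac{1}{|M(G,H,K)|}\le \frac{1}{|L(m,i;h,K)|}\le 1.
\end{equation*}
Combining this with the elementary bounds $\beta(m)\le \alpha(m,i)\le \gamma(m)$, coming directly from the definitions of $\beta(m)$ and $\gamma(m)$, yields
\begin{equation*}
\frac{\beta(m)}{|M(G,H,K)|}\le \frac{\alpha(m,i)}{|L(m,i;h,K)|}\le \gamma(m)
\end{equation*}
for each $i=1,\ldots,k_K(H)$.

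Summing these inequalities over $i$ and dividing by $|H|$ produces
\begin{equation*}
\frac{\beta(m)\, k_K(H)}{|H|\,|M(G,H,K)|}\le d^\wedge_m(H,K)\le \frac{\gamma(m)\, k_K(H)}{|H|}.
\end{equation*}
To finish, I would invoke the identity $d(H,K)=k_K(H)/|H|$, which is precisely the final equality in the definition of the relative commutativity degree recalled in the introduction, to rewrite the extremes as $\beta(m)\,d(H,K)/|M(G,H,K)|$ and $\gamma(m)\,d(H,K)$, giving \eqref{et1}.

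There is no serious obstacle here: the only non-trivial ingredient is the embedding $L(m,i;h,K)\hookrightarrow M(G,H,K)$, which is already established in Lemma \ref{l:1} under exactly the hypothesis $G=HK$ with $H,K$ normal that is assumed in the statement. Everything else is bookkeeping, so the proof is essentially a one-line estimate applied termwise inside the sum from Lemma \ref{l:1}.
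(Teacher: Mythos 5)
Your proposal is correct and follows essentially the same route as the paper: both start from the formula in Lemma \ref{l:1}, use the embedding of $L(m,i;h,K)=C_K(h_i^m)/C_K^{\wedge}(h_i^m)$ into $M(G,H,K)$ to bound each summand between $\beta(m)/|M(G,H,K)|$ and $\gamma(m)$, and conclude via $d(H,K)=k_K(H)/|H|$. The only difference is cosmetic (you phrase the estimate through $|L(m,i;h,K)|$ while the paper writes the same ratio of centralizer orders explicitly).
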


\begin{proof}
Keeping in mind Lemma \ref{l:1} and  noting that $C_K(h^m_i)/C^\wedge_K(h^m_i)$ is isomorphic to a subgroup of $M(G,H,K)$, we have $|C^\wedge_K(h^m_i)|/|C_K(h^m_i)|\ge 1/|M(G,H,K)|$. Therefore
\begin{equation}d^\wedge_m (H,K)=  \frac{1}{|H|} \sum^{k_K(H)}_{i=1}  \alpha(m,i) \ \frac{|C^\wedge_K(h^m_i)|}{|C_K(h^m_i)|} \ge
\frac{\beta(m) \ k_K(H)}{|H| \ |M(G,H,K)|}
=\beta(m) \ \frac{\ d(H,K)}{|M(G,H,K)|}. \end{equation} 
On another hand,  again Lemma \ref{l:1} implies
\begin{equation}
d^\wedge_m (H,K)=  \frac{1}{|H|} \sum^{k_K(H)}_{i=1}  \alpha(m,i) \ \frac{|C^\wedge_K(h^m_i)|}{|C_K(h^m_i)|}
\le \gamma(m)\ \frac{k_K(H)}{|H|}= \gamma(m) \ d(H,K).
\end{equation}
\end{proof}

In \cite{e1, pf, pr} it was noted that a group $G$ such that $Z^\wedge(G)=Z(G)$ has strong structural restrictions; among these it was noted in \cite{pr} that $d^\wedge_1(G)=d^\wedge(G)=d(G)$. We find something of similar in the next result.

\begin{corollary}\label{c:2} Let $G=HK$ for two normal subgroups $H$ and $K$. If $M(G,H,K)$ is trivial and $H$ has exponent $m-1$, then  $d^\wedge_m(H,K)=d(H,K)$ for all $m\ge 1$.
\end{corollary}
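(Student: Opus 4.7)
The plan is to derive the equality as the squeeze of the bounds in Theorem \ref{t:1}. Both hypotheses are tailored to collapse those bounds: the triviality of $M(G,H,K)$ kills the denominator on the left and the exponent condition forces $\beta(m) = \gamma(m) = 1$.

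First I would unpack the exponent hypothesis. Saying that $H$ has exponent $m-1$ means $h^{m-1} = 1$ for every $h \in H$, which is exactly the statement $h^m = h$ for all $h \in H$. In particular, for each representative $h_i$ of a $K$--conjugacy class of $H$ we have $h_i^m = h_i$, and therefore $C_K(h_i^m) = C_K(h_i)$. The integer $\alpha(m,i)$ was defined as the index $|C_K(h_i^m) : C_K(h_i)|$, so $\alpha(m,i) = 1$ for every $i = 1, \ldots, k_K(H)$. Consequently
\[
\beta(m) = \min_i \alpha(m,i) = 1 = \max_i \alpha(m,i) = \gamma(m).
\]

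Next I would feed this, together with the assumption $|M(G,H,K)| = 1$, into Theorem \ref{t:1}. The displayed inequality \eqref{et1} becomes
\[
d(H,K) = \beta(m)\,\frac{d(H,K)}{|M(G,H,K)|} \le d^\wedge_m(H,K) \le \gamma(m)\,d(H,K) = d(H,K),
\]
so both inequalities are equalities and $d^\wedge_m(H,K) = d(H,K)$, as claimed.

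There is essentially no obstacle here: the entire content has already been extracted by Theorem \ref{t:1} and by the bookkeeping in Corollary \ref{c:1extra} (which establishes exactly the same vanishing of $\alpha(m,i)$ and of $|L(m,i;h,K)|$ under these hypotheses). The only point worth double--checking is that $H$ and $K$ are indeed normal in $G$ with $G = HK$, which is what makes Theorem \ref{t:1} available; this is hypothesized directly in the statement.
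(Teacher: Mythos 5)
Your proposal is correct and follows exactly the paper's own argument: the exponent hypothesis gives $h_i^m=h_i$, hence $\alpha(m,i)=\beta(m)=\gamma(m)=1$, and the triviality of $M(G,H,K)$ collapses the two bounds of Theorem \ref{t:1} to the single value $d(H,K)$. No differences worth noting.
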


\begin{proof} Since $M(G,H,K)=1$ is trivial, the lower bound in \eqref{et1} is reduced to $\beta(m) \ d(H,K)$.  $H$ has exponent $m-1$ and then, using the notations of Lemma \ref{l:1}, $h^{m-1}_i=1$, that is, $h^m_i=h_i$, for all $i=1,\ldots, k_K(H)$. Consequently, $\alpha(m,i)=\alpha(m)=\beta(m)=\gamma(m)=1$.  Then \eqref{et1} becomes $d(H,K)\le d^\wedge_m(H,K)\le d(H,K)$ and the result follows. 
\end{proof}

Another consequence of Theorem \ref{t:1} is related to Proposition \ref{p:1}.

\begin{corollary}\label{c:3} Let $G=HK$ for two normal subgroups $H$ and $K$ and $p$ be a prime divisor of $|H|$. Then there exists an integer $r\ge 1$ such that
\begin{equation} \frac{\gamma(p^{r-1})}{p} \ k_K(H)  \ge   d^\wedge_{p^{r-1}}(H,K) \ge  d^\wedge_{p^{r-2}}(H,K)\ge \ldots 
 \ge \beta(p^{r-1}) \ \frac{\ d(H,K)}{|M(G,H,K)|}.\end{equation} 
\end{corollary}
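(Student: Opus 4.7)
The plan is to obtain the chain by splicing together Proposition \ref{p:1} (which supplies the descending ladder in the middle) and Theorem \ref{t:1} applied at the top index $m=p^{r-1}$ (which supplies the two flanking estimates).

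First I would invoke Proposition \ref{p:1}: since $p$ divides $|H|$, Cauchy's theorem produces an element of $H$ whose order is a positive power $p^r$ of $p$, and this is precisely the starting hypothesis used in the proof of that proposition. Consequently we inherit the descending chain
\[
d^\wedge_{p^{r-1}}(H,K)\ \ge\ d^\wedge_{p^{r-2}}(H,K)\ \ge\ \ldots\ \ge\ d^\wedge(H,K),
\]
which fills the middle portion of the claimed inequality.

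Next I would apply Theorem \ref{t:1} with $m=p^{r-1}$, which yields simultaneously
\[
\beta(p^{r-1})\,\frac{d(H,K)}{|M(G,H,K)|}\ \le\ d^\wedge_{p^{r-1}}(H,K)\ \le\ \gamma(p^{r-1})\,d(H,K).
\]
Since $p\mid|H|$ forces $|H|\ge p$, we have $d(H,K)=k_K(H)/|H|\le k_K(H)/p$, and substitution into the right-hand inequality above produces the leftmost bound $d^\wedge_{p^{r-1}}(H,K)\le \gamma(p^{r-1})\,k_K(H)/p$ in the claim; the left-hand inequality furnishes the rightmost bound, attached to the top term of the ladder.

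The argument has no real obstacle; it is a formal composition of Proposition \ref{p:1} with Theorem \ref{t:1}, the only mildly delicate point being the passage $d(H,K)\le k_K(H)/p$ via $|H|\ge p$. One should note that the rightmost inequality of the displayed chain is logically a lower bound on the top term $d^\wedge_{p^{r-1}}(H,K)$ rather than a transitive bound propagated down to $d^\wedge(H,K)$, since in general $\beta(p^{r-1})$ need not equal $\beta(1)=1$.
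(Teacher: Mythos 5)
Your argument is exactly the paper's: the published proof consists of the single line ``It is enough to apply Theorem \ref{t:1} and Proposition \ref{p:1},'' and your splicing of the descending ladder from Proposition \ref{p:1} with the two-sided estimate of Theorem \ref{t:1} at $m=p^{r-1}$ (using $|H|\ge p$ to pass from $\gamma(p^{r-1})\,d(H,K)$ to $\gamma(p^{r-1})\,k_K(H)/p$) is the intended reconstruction. Your closing caveat about where the rightmost inequality attaches is a fair observation about the statement itself, not a gap in your proof.
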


\begin{proof} It is enough to apply Theorem \ref{t:1} and Proposition \ref{p:1}.
\end{proof}

In a certain sense, Corollary \ref{c:1}  continues to be true without restrictions on $m$. This is illustrated below.

\begin{proposition}\label{p:3} Let $G=HK$ for two normal subgroups $H$ and $K$ such that $[H,K]\not=1$. Then $d^\wedge_m(H,K)\le \gamma(m) \ \frac{2p-1}{p^2}$, where  $p$ is the smallest  prime dividing $|G|$ and $|K|$. In particular, if $H$ has exponent $m-1$, then $d^\wedge_m(H,K)\le \frac{2}{p}$.
\end{proposition}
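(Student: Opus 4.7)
The plan is to reduce the problem to a bound on the classical degree $d(H,K)$. By the upper estimate of Theorem~\ref{t:1}, $d^\wedge_m(H,K)\le\gamma(m)\,d(H,K)$, so it suffices to show
\[
d(H,K)\le \frac{2p-1}{p^2}
\]
under the stated hypotheses.

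The strategy is a two-step split. Setting $N:=C_H(K)=\{h\in H\mid [h,k]=1 \text{ for all } k\in K\}$, write
\[
|H|\,|K|\,d(H,K)=\sum_{h\in H}|C_K(h)|=|N|\,|K|+\sum_{h\notin N}|C_K(h)|.
\]
If $h\notin N$ then $C_K(h)$ is a proper subgroup of $K$; since $p\mid |K|$ and $p$ is the smallest prime dividing $|G|$, it is also the smallest prime dividing $|K|$, so $[K:C_K(h)]\ge p$ and $|C_K(h)|\le |K|/p$. Summing gives
\[
d(H,K)\le \frac{|N|}{|H|}+\Bigl(1-\frac{|N|}{|H|}\Bigr)\frac{1}{p}=\frac{1}{p}\Bigl(1+(p-1)\frac{|N|}{|H|}\Bigr).
\]
The second step is to show $|N|/|H|\le 1/p$. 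Because $[H,K]\neq 1$, $N$ is a proper subgroup of $H$, so $[H:N]$ is at least the smallest prime dividing $[H:N]$. That prime divides $|H|$, hence $|G|$, and is therefore $\ge p$. Substituting $|N|/|H|\le 1/p$ yields $d(H,K)\le (2p-1)/p^2$, and multiplying by $\gamma(m)$ gives the first inequality.

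For the ``in particular'' part, if $H$ has exponent $m-1$ then $h_i^{m-1}=1$, so $h_i^m=h_i$ and $\alpha(m,i)=1$ for every $i$ (as in Corollary~\ref{c:2}); hence $\gamma(m)=1$, and since $(2p-1)/p^2\le 2/p$ we conclude $d^\wedge_m(H,K)\le 2/p$. The main obstacle is the bound $|N|/|H|\le 1/p$: it depends essentially on $p$ being the smallest prime of $|G|$ (not merely of $|K|$), so that every proper subgroup of a subgroup of $G$ has index at least $p$. With only the smallest prime of $|H|$ available, the target constant $(2p-1)/p^2$ need not hold in general.
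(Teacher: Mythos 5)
Your proof is correct and follows essentially the same route as the paper: both reduce to $d(H,K)\le\frac{2p-1}{p^2}$ via the upper bound of Theorem~\ref{t:1} and then handle the ``in particular'' clause by observing $\gamma(m)=1$ when $H$ has exponent $m-1$. The only difference is that the paper outsources the bound on $d(H,K)$ to a citation (Das--Nath, Corollary~3.9), whereas you carry out that two-step splitting argument (over $C_H(K)$ and its complement, using that $p$ is the smallest prime dividing $|G|$) explicitly and correctly.
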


\begin{proof} From the choice of $p$ we deduce $|C^\wedge_K(H)| \le |C_K(H)| \le \frac{|K|}{p}$. Now $|C_K(H)|-|C^\wedge_K(H)|\le \frac{|K|}{p}$. The upper bound in Theorem \ref{t:1} allows us to continue as in \cite[Corollary 3.9]{dn1} and so
\begin{equation} d^\wedge_m (H,K)\le \gamma(m) \ d(H,K) \le \gamma(m) \frac{2p-1}{p^2}.\end{equation}
In particular, if $H$ has exponent $m-1$, then the argument in Corollary \ref{c:2} implies $\gamma(m)=1$, hence \begin{equation}\frac{2p-1}{p^2}=\frac{2}{p}-\frac{1}{p^2} \le \frac{2}{p}.\end{equation}
\end{proof}

The following result justifies the interest  for the numerical restrictions on $d^\wedge_m(H,K)$, which have been the subject of most of the previous bounds. These allow us to describe the position of some subgroups in the whole group, when we consider some special values of the $m$-th relative exterior degree. 

\begin{corollary}\label{c:4}  Let $H$ be a normal subgroup of $G$ of exponent $m-1$ and $K$ be a normal subgroup of $G$ such that $G=HK$ and  $M(G,H,K)$ is trivial.  If $d^\wedge_m(H,K) = \frac{2p-1}{p^2}$ for some prime $p$, then $p$ divides $|G|$. If $p$ is the smallest prime divisor of $|G|$, then $|H:C_H(K)|=|K:C_K(H)|=p$ and, hence, $H \not= K$. In particular, if $d^\wedge_m(H,K)=\frac{3}{4}$, then $|H:C_H(K)|=|K:C_K(H)|=2$.
\end{corollary}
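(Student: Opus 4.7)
The plan is to reduce the statement to an assertion about the classical commutativity degree $d(H,K)$ and then run a Gustafson-type inequality argument. Since $M(G,H,K)=1$ and $H$ has exponent $m-1$, Corollary \ref{c:2} gives $d^\wedge_m(H,K) = d(H,K)$, so the hypothesis becomes $d(H,K) = \frac{2p-1}{p^2}$. In particular $d(H,K)\neq 1$, hence $[H,K]\neq 1$.

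For the first claim I would use an elementary arithmetic observation: $\gcd(2p-1,p^2)=\gcd(-1,p^2)=1$, so the fraction $\frac{2p-1}{p^2}$ is already in lowest terms. Writing $d(H,K) = N/(|H||K|)$ with $N=|\{(h,k)\in H\times K : [h,k]=1\}|$ an integer, the identity $Np^2 = (2p-1)|H|\,|K|$ forces $p^2 \mid |H|\,|K|$, so $p\mid |H|$ or $p\mid |K|$, whence $p\mid |G|$ since $G=HK$.

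For the second claim, assume $p$ is the smallest prime divisor of $|G|$. Set $q=|H:C_H(K)|$ and let $p_K$ denote the smallest prime dividing $|K|$. Splitting $\sum_{h\in H}|C_K(h)|$ according to whether $h\in C_H(K)$, and using $|C_K(h)|\le |K|/p_K$ for $h\notin C_H(K)$, one obtains
\begin{equation*}
d(H,K) \le \frac{1}{q} + \frac{q-1}{q\,p_K} = \frac{p_K + q - 1}{p_K\, q}.
\end{equation*}
The key algebraic step is the factorization $p(p+q-1) - q(2p-1) = (p-1)(p-q)$, which shows $\frac{p+q-1}{pq} < \frac{2p-1}{p^2}$ whenever $q>p$; since the bound $\frac{p_K+q-1}{p_K q}$ is also strictly decreasing in $p_K$ and $p_K\ge p$, the equality $d(H,K) = \frac{2p-1}{p^2}$ forces $p_K = p$ and $q = p$ (the case $q=1$ is excluded by $[H,K]\neq 1$). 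Exchanging the roles of $H$ and $K$ and invoking the symmetry $d(H,K)=d(K,H)$ yields $|K:C_K(H)|=p$ as well.

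For $H\neq K$: were $H=K$, then $C_H(K)=Z(H)$ and $|H:Z(H)|=p$, so $H/Z(H)$ would be cyclic and $H$ abelian, forcing $[H,K]=[H,H]=1$, a contradiction. The particular case $d^\wedge_m(H,K)=\frac{3}{4}=\frac{2\cdot 2-1}{2^2}$ corresponds to $p=2$; the first part guarantees $2\mid |G|$, so $2$ is automatically the smallest prime divisor of $|G|$, and the second part gives both indices equal to $2$. The main technical obstacle is verifying that the upper bound $\frac{p_K+q-1}{p_K q}$ reaches $\frac{2p-1}{p^2}$ only at $p_K=q=p$; the factorization $(p-1)(p-q)$ makes this transparent, but one has to vary $p_K$ and $q$ independently and separately dispose of the degenerate case $q=1$.
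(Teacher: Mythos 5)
Your proposal is correct, and its first step --- invoking Corollary \ref{c:2} to convert the hypothesis into $d(H,K)=\frac{2p-1}{p^2}$ --- is exactly what the paper does. The difference lies in what happens next: the paper simply cites \cite[Proposition 3.1]{dn1} for everything about $d(H,K)$, whereas you re-derive that external result from scratch with a Gustafson-type argument. Your derivation checks out: the arithmetic step ($\gcd(2p-1,p^2)=1$ forces $p^2\mid |H|\,|K|$, hence $p\mid|G|$), the bound $d(H,K)\le \frac{p_K+q-1}{p_K q}$ obtained by splitting the sum $\sum_{h\in H}|C_K(h)|$ over $C_H(K)$ and its complement, the factorization $p(p+q-1)-q(2p-1)=(p-1)(p-q)$, the monotonicity in $p_K$, and the squeeze forcing $p_K=q=p$ are all sound; the symmetry $d(H,K)=d(K,H)$ and the $H/Z(H)$-cyclic argument for $H\neq K$ are standard and correct. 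One small point you leave implicit: after excluding $q=1$, you need $q\ge p$ before the monotonicity argument applies, and this holds because $q=|H:C_H(K)|$ divides $|H|$, which divides $|G|$, so any divisor of $q$ exceeding $1$ is at least the smallest prime divisor $p$ of $|G|$; it would be worth stating this explicitly. What your self-contained route buys is independence from the reference \cite{dn1}; what the paper's route buys is brevity, at the cost of pushing the entire content of the second and third assertions into a citation.
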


\begin{proof} Corollary \ref{c:2} implies  $d^\wedge_m(H,K)=d(H,K)$ for all $m\ge 1$. The rest follows from \cite[Proposition 3.1]{dn1}.
\end{proof}

\section{Dihedral groups and generalized quaternion groups}

Thanks to the results in Section 2 and to those in \cite{elr, err, l1, l2, pr, nr}, we want to have a closer look at the class of dihedral groups and at that of generalized quaternion groups. Their structure  is described in \cite[Theorem 1.2]{b}: these groups possess a cyclic group of index 2 and are metacyclic. We will be quite general and recall that \begin{equation}D_{2n}=C_n \rtimes C_2=\langle a, b \ | \ a^n=b^2=1, b^{-1}ab=a^{-1}\rangle \end{equation}
is the \textit{dihedral group of order} $2n$, where $n\ge1$.  Assume $D_{2n}=\{1,a,a^2,\ldots,a^{n-1},b,ab,a^2b,\ldots,a^{n-1}b\}$ and  $t=\gcd(m,n)$. Since $Z^{\wedge}(G)=1$ and $(a^{\frac{in}{t}})^{m}=1$ for $0\leq i\leq t-1$,  for $t$
elements of $D_{2n}$ we have $|C^\wedge_{D_{2n}}(x^m)|=2n$ and for $n-t$ elements $|C^\wedge_{D_{2n}}(x^m)|=n$. Now, if $m$ is odd,
then $|C^\wedge_{D_{2n}}((a^jb)^m)|=2$ for $0\leq j\leq n-1$ and so $d^\wedge_m(D_{2n})=\frac{n^2+nt+2n}{4n^2}.$
If $m$ is even, then $(a^jb)^m=1$ and so $|C^\wedge_{D_{2n}}((a^jb)^m)|=2n$, therefore  $d^\wedge_m(D_{2n})=\frac{3n^2+nt}{4n^2}.$
Summarizing,
\begin{equation}\label{d}d^\wedge_m(D_{2n})= \left\{ \begin{array}{lcl}\frac{3n+\gcd(m,n)}{4n},&\,\,& \mathrm{if} \ m \ \mathrm{ is \ even},\\
\frac{n + \gcd(m,n) + 2}{4n},&\,\,& \mathrm{if} \ m \ \mathrm{is \ odd}. \end{array} \right.
\end{equation}
A similar computation can be made for
\begin{equation}
Q_n= \langle a,b \ | \ a^n=b^2=(ab)^2 \rangle,
\end{equation}
which is called \textit{generalized quaternion group of order} $4n$. Here, as done for $D_{2n}$, we find that
\begin{equation}\label{q}d^\wedge_m(Q_n)= \left\{ \begin{array}{lcl}\frac{3n +  \gcd(m,n)}{4n},&\,\,& \mathrm{if} \ m \ \mathrm{ is \ even},\\
\frac{n + \gcd(m,n) + 2}{4n},&\,\,& \mathrm{if} \ m \ \mathrm{is \ odd}. \end{array} \right.
\end{equation}
From \cite[Examples 3.1 and 3.2]{pr},    $d^\wedge(D_{2n})=d(D_{2n}) =d^\wedge(Q_n)=d(Q_n) $ for all $n\ge1$ and  we have just shown that for all $m\ge1$ (and for all $n\ge1$) $d^\wedge_m(D_{2n})=d^\wedge_m(Q_n).$ We note that $|M(D_{2n})|\not=1$ and $|M(Q_n)|=1$ and then we cannot apply Corollary \ref{c:2},  but \eqref{d} and \eqref{q} show, in some sense, that the thesis of Corollary \ref{c:2} is still true. We note also that \eqref{d} and \eqref{q} agree with \cite[Example 3.11]{elr} and  \cite[Section 4]{nr}.


\begin{thebibliography}{20}
\bibitem{b}Y. Berkovich, \textit{Groups of Prime Power Order}, Vol.1, de Gruyter, Berlin, 2008.

\bibitem{bfs} F. R. Beyl, U. Felgner and P. Schmid, On groups occurring as center factor groups, \textit{J. Algebra}  \textbf{61} (1979), 161--177.

\bibitem{bjr} R. Brown, D. L. Johnson and E. F. Robertson, Some computations of non-abelian tensor products of groups,  \textit{J. Algebra} {\bf 111} (1987), 177--202.

\bibitem{bl} R. Brown and J.-L. Loday, Van Kampen theorems for diagrams of spaces, \textit{Topology} {\bf 26} (1987), 311--335.


\bibitem{aavv} J. H. Conway, R. T. Curtis, S. P. Norton, R. A. Parker and R. A. Wilson, \textit{The Atlas of Finite Simple Groups}, Oxford University Press, Oxford, 1985.

\bibitem{dn1} A. K. Das and R. K. Nath, On the generalized relative commutative degree of a finite group, \textit{Int. Electr. J. Algebra} \textbf{7} (2010), 140--151.

\bibitem{dn2} A. K. Das and R. K. Nath, On a lower bound of commutativity degree, \textit{Rend. Circ. Mat. Palermo} \textbf{59} (2010), 137--142.

\bibitem{e1} G. Ellis, Tensor products and $q$-crossed modules,  \textit{J. London Math. Soc.} \textbf{2} (1995), 241--258.

\bibitem{e2} G. Ellis, A bound for the derived Frattini subgroups of a prime-power group,  \textit{Proc. Amer. Math. Soc.} \textbf{126} (1998), 2513--2523.

\bibitem{e3} G. Ellis, The Schur multiplier of a pair of groups,  \textit{Appl. Categ. Structures} \textbf{6} (1998), 355--371.

\bibitem{elr}  A. Erfanian,  P. Lescot and  R. Rezaei, On the relative commutativity degree of a subgroup of a finite group, \textit{Comm. Algebra} {\bf{35}} (2007), 4183--4197.

\bibitem{err} A. Erfanian, R. Rezaei, F. G. Russo,  \textit{Relative $n$-isoclinism classes and relative $n$-th nilpotency degree of finite groups}, E--print, Cornell University Library, arXiv:1003.2306, 2010.

\bibitem{gr} R. M. Guralnick and G. R. Robinson,  On the commuting probability in finite groups, \textit{J. Algebra} \textbf{300} (2006), 509--528.

\bibitem{l1}P. Lescot, Isoclinism classes and commutativity degrees of finite groups, \textit{J. Algebra} \textbf{177} (1995), 847--869.

\bibitem{l2}P. Lescot, Central extensions and commutativity degree, \textit{Comm. Algebra} \textbf{29} (2001), 4451--4460.

\bibitem{pf}P. Niroomand and F. G. Russo, A note on the exterior centralizer, \textit{Arch. Math. (Basel)} \textbf{93} (2009), 505--512. 


\bibitem{pr}P. Niroomand and R. Rezaei, On the exterior degree of finite groups, \textit{Comm. Algebra} \textbf{39} (2011), 335--343. 

\bibitem{nr}P. Niroomand and R. Rezaei,  The exterior degree of a pair of finite groups, E--print, Cornell University Library, arXiv:1101.4312v1, 2011.

\bibitem{r} D. J. Rusin,   What is the probability that two elements of a finite group commute?, \textit{Pacific J. Math.} \textbf{82} (1979), 237--247.




\end{thebibliography}
\end{document}